\newtheoremstyle{theorem}
  {12pt}          
  {12pt}  
  {\sl}  
  {\parindent}     
  {\bf}  
  {. }    
  { }    
  {}     
\theoremstyle{theorem}
\newtheorem{theorem}{Theorem}
\newtheorem{remark}[theorem]{Remark}
\newtheorem{proposition}[theorem]{Proposition}
\newtheorem{lemma}[theorem]{Lemma}
\newtheorem{definition}[theorem]{Definition}
\newcommand{\ic}{\ensuremath{\mathcal{I}}}
\newcommand{\oc}{\ensuremath{\mathcal{O}}}
\newcommand{\fc}{\ensuremath{\mathcal{F}}}
\newcommand{\Pt}{\mathbb{P}^3}
\newcommand{\PtD}{\mathbb{P}_3^*}
\newcommand{\Pn}{\mathbb{P}^n}
\newcommand{\oG}{\omega}
\newcommand{\fG}{\varphi}
\newcommand{\lag}{\langle}
\newcommand{\rag}{\rangle}
\newcommand{\bds}{\begin{displaystyle}}
\newcommand{\eds}{\end{displaystyle}}
\title[A restriction theorem.]{A restriction theorem for stable rank two vector bundles on $\Pt$.}
\author{Philippe ELLIA - Laurent GRUSON}
\address{Dipartimento di Matematica, 35 via Machiavelli, 44100 Ferrara, Italy}
\email{phe@unife.it}
\address{L.M.V., Université de Versailles-St Quentin, 45 Av. des Etats-Unis, 78035 Versailles, France}
\email{Laurent.Gruson@uvsq.fr}
\subjclass[2010] {14J60} \keywords{Rank two vector bundles, projective space, restriction, plane.}
\begin{document}
\maketitle


\begin{abstract}Let $E$ be a normalized, rank two vector bundle on $\Pt$. Let $H$ be a general plane. If $E$ is stable with $c_2(E) \geq 4$, we show that $h^0(E_H(1))\leq 2+c_1$. It follows that $h^0(E(1)) \leq 2+c_1$. We also show that if $E$ is properly semi-stable and indecomposable, $h^0(E_H(1))=3$.
\end{abstract}

\section{Introduction.}

We work over an algebraically closed field of characteristic zero. Let $E$ denote a stable, normalized ($-1 \leq c_1(E)\leq 0$) rank two vector bundle on $\Pt$. By Barth's restriction theorem (\cite{Barth}) if $H$ is a general plane, then $E_H$ is stable (i.e. $h^0(E_H)=0$) except if $E$ is a null-correlation bundle ($c_1=0, c_2=1$). In this note we prove:

\begin{theorem}
\label{T-thm}
Let $E$ be a stable, normalized, rank two vector bundle on $\Pt$. Assume $c_2(E)\geq 4$. Let $H$ be a general plane, then:\\
(a) $h^0(E_H(1)) \leq 1$ if $c_1=-1$ and\\
(b) $h^0(E_H(1)) \leq 2$ if $c_1=0$.\\
In particular it follows that $h^0(E(1)) \leq 2+c_1$.
\end{theorem}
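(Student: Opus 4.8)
The plan is to prove the statement about the restriction $E_H$ on a general plane $H$, and then deduce the bound on $h^0(E(1))$ on all of $\Pt$ by a standard restriction-sequence argument. The key normalization is that $E$ is stable, so $h^0(E)=0$, and by Barth's theorem (cited above) $E_H$ is also stable for general $H$ (the null-correlation case $c_2=1$ is excluded since we assume $c_2\geq 4$). The heart of the matter is therefore to bound the number of sections of the twisted restricted bundle $E_H(1)$ on the plane.

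Let me think about the structure. For a rank two bundle on the plane $\Ptw$ with a section $s\in H^0(E_H(1))$, the zero locus of $s$ (after twisting back appropriately) is a length-$d$ subscheme $Z\subset H$ sitting in an exact sequence
\begin{equation}
0 \to \oc_H \to E_H(1) \to \ic_Z(c_1+2) \to 0,
\end{equation}
where the ideal sheaf $\ic_Z$ controls the Chern class data; computing $c_2$ one finds that $\deg Z$ is determined by $c_1,c_2$. **First I would** set up this correspondence carefully: a nonzero section of $E_H(1)$ corresponds to such a $Z$, and a two-dimensional (or three-dimensional) space of sections would force the existence of too many independent subschemes of the prescribed degree lying on $H$, which one then rules out by a dimension count or a Cayley--Bacharach type constraint coming from the condition $h^0(E_H)=0$. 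The bounds $h^0\leq 1$ (case $c_1=-1$) and $h^0\leq 2$ (case $c_1=0$) should reflect exactly how many linearly independent sections are compatible with stability for $c_2\geq 4$.

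For the reduction from $H$ to $\Pt$, I would use the restriction sequence
\begin{equation}
0 \to E \to E(1) \to E_H(1) \to 0
\end{equation}
together with stability of $E$, which gives $h^0(E)=0$ and, more importantly, $h^0(E(1))$ injects into $h^0(E_H(1))$ once one checks that the connecting map from $H^0(E_H(1))$ does not contribute extra sections coming from $H^0(E(1))/H^0(E)$. Concretely, from the long exact sequence one gets $h^0(E(1))\leq h^0(E)+h^0(E_H(1))=h^0(E_H(1))$, which is exactly the claimed inequality $h^0(E(1))\leq 2+c_1$ once part (a) or (b) is in hand. This last step is genuinely easy and follows purely formally.

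**The hard part will be** the plane estimate, specifically controlling the dimension of $H^0(E_H(1))$ for $c_2$ large. The naive Riemann--Roch computation on $\Ptw$ gives $\chi(E_H(1))$, but $h^0$ can a priori exceed this if $h^1$ is large, so the real work is showing $h^1(E_H(1))$ is controlled, or equivalently bounding $h^0$ directly via the geometry of the zero-schemes $Z$. I expect the crux to be a \emph{monad} or \emph{Beilinson}-type analysis of $E_H$, or else a clever use of the stability of $E_H$ to bound global sections of the twist: stability says $E_H$ has no sections, and one leverages this through the Cayley--Bacharach property of the zero-dimensional schemes $Z$ to limit how many sections $E_H(1)$ can support. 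The assumption $c_2\geq 4$ is presumably what makes the relevant degree large enough that the subschemes $Z$ cannot all fit on a line or conic, which is where small-$c_2$ exceptions would otherwise arise.
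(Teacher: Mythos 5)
Your setup (the section/zero-scheme sequence $0\to\oc_H\to E_H(1)\to\ic_{Z,H}(c_1+2)\to 0$ and the deduction $h^0(E(1))\le h^0(E)+h^0(E_H(1))=h^0(E_H(1))$ from the restriction sequence) is correct and matches the paper's use of these tools, but the part you yourself flag as "the hard part" is left entirely open, and the strategy you sketch for it cannot work. You propose to bound $h^0(E_H(1))$ by an argument intrinsic to the plane: stability of $E_H$ plus Cayley--Bacharach/dimension-count constraints on the zero-schemes $Z$. This is impossible in principle, because the bound is \emph{false} for stable rank two bundles on $\Ptw$ in general: if $c_1=0$ and the degree-$(c_2+1)$ scheme $Z$ consists of $c_2$ points on a line plus one residual point, then $h^0(\ic_{Z,H}(2))=2$ and $h^0(E_H(1))$ can equal $3$ with $E_H$ stable. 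The paper's own Remark \ref{R-reflexive} exhibits stable \emph{reflexive} sheaves on $\Pt$ (whose general plane restrictions are stable plane bundles) violating the conclusion, so no argument that only sees $E_H$ as an abstract stable bundle on $H$ can succeed. The assumption that $E$ is locally free on all of $\Pt$ must enter through genuinely three-dimensional geometry.

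That is exactly what the paper does, and it is the content you are missing. Assuming the bound fails, Lemma \ref{L-stable special planes} shows every stable plane contains a \emph{unique} line $L_H$ on which $E$ has splitting type $(c_2-1,c_1-c_2+1)$ (a super-jumping line), precisely because the conics through $Z$ must share a fixed line. This defines a rational map $\PtD \dashrightarrow G(1,3)$, $H\mapsto L_H$; one shows it cannot be a morphism (a section of the incidence variety over $\PtD$ would give a nowhere-vanishing section of a twist of $T_{\PtD}$, which does not exist), so by Zariski's Main Theorem there is a plane containing infinitely many super-jumping lines. That plane $H_0$ satisfies $h^0(E_{H_0}(-c_2+1))\neq 0$, and a reduction step along $H_0$ produces a reflexive sheaf whose Chern classes violate Hartshorne's bound $c_3\le c_2^2-c_2+2$ (resp.\ the semistable bound), giving the contradiction. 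None of this is visible from the planar picture alone, so your proposal as it stands has a genuine gap rather than a fixable omission.
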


The idea of the proof is as follows: if the theorem is not true then every general plane contains a unique line, $L$, such that $E_L$ has splitting type $(r, -r+c_1)$, $r \geq c_2-1$. We call such a line a ''super-jumping line''. Then we show that these super jumping lines are all contained in a same plane, $H$. The plane $H$ is very unstable for $E$. Performing a reduction step with $H$, we get a contradiction.

We observe (Remark \ref{R-sharp}) that the assumptions (and conclusions) of the theorem are sharp.\\
For sake of completeness we show (Proposition \ref{P-sstable}) that if $E$ is properly semi-stable, indecomposable, then $h^0(E_H(1))=3$ for $H$ a general plane.

\section{Proof of the theorem.}

We need some definitions:

\begin{definition}
Let $E$ be a stable, normalized rank two vector bundle on $\Pt$. A plane $H$ is \emph{stable} if $E_H$ is stable; it is \emph{semi-stable} if $h^0(E_H)\neq 0$ but $h^0(E_H(-1))=0$. A plane is \emph{special} if $h^0(E_H(-m))\neq 0$ with $m > 1$.\\
A line is general if the splitting type of $E_L$ is $(0,c_1)$. A line $L$ is a \emph{super jumping line} (s.j.l.) if the splitting type of $E_L$ is $(r, -r+c_1)$, with $r \geq c_2-1$.
\end{definition}

\begin{lemma}
\label{L-stable special planes}
Let $E$ be a stable, normalized rank two vector on $\Pt$. Assume $c_2(E)\geq 4$ and $h^0(E_H(1)) > 2+c_1$ if $H$ is a general plane. Then:\\
(i) Every stable plane contains a unique s.j.l. all the other lines are general or, if $c_1=0$, of type $(1,-1)$.\\
(ii) A semi-stable plane contains at most one s.j.l.\\
(iii) There is at most one special plane.
\end{lemma}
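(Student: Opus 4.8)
The plan is to reduce all three parts to the study of a single object: the zero--scheme of a twisted section of the restricted bundle $F:=E_H$ on the plane $H\cong\Ptw$ (so $c_1(F)=c_1$, $c_2(F)=c_2\ge 4$), combined with restriction to lines $L\subset H$. The dictionary I will use is that $L$ is a super jumping line for $E$ precisely when $F_L$ has the very unbalanced splitting $(r,c_1-r)$, $r\ge c_2-1$, i.e. when $L$ is a jumping line of the plane bundle $F$ of order $r\ge c_2-1$. Two elementary facts drive everything: if $F$ is stable then a nonzero section of $F(1)$ has no divisorial zeros (a divisor would produce a sub--line--bundle of $F$ violating stability), so its zero--scheme $Z$ is $0$--dimensional of length $c_2(F(1))=c_2+c_1+1$, fitting in $0\to\oc_H\to F(1)\to\ic_Z(c_1+2)\to 0$; and a nonzero section of a rank two bundle on $\Pun$ vanishes in length at most the larger of its two splitting degrees.

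For (i) I start from a stable plane $H$, where $h^0(F)=0$ and, by hypothesis, $h^0(F(1))>2+c_1$. The sequence above gives $h^0(\ic_Z(c_1+2))\ge h^0(F(1))-1\ge 2+c_1$. For $c_1=0$ this is a pencil of conics through the length $c_2+1$ scheme $Z$; since two conics without common component meet in length $4<c_2+1$, the pencil has a fixed line $L$, so $Z$ lies on $L$ up to one residual point. For $c_1=-1$ it is directly a line $L\supset Z$ (length $c_2$). Restricting the section to $L$, its vanishing of length $\ge c_2-1$ forces the larger summand of $F_L$ to have degree $\ge c_2-1$, i.e. $L$ is a super jumping line: this gives existence. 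For uniqueness and the claim on the remaining lines I invoke the classical description of the jumping locus: the curve of jumping lines of a stable rank two bundle on $\Ptw$ has degree $c_2$, and a jumping line of order $r$ is a point of multiplicity at least $r$ on it. Two super jumping lines would be two points of multiplicity $\ge c_2-1$, so the line of $\check{\mathbb{P}}^2$ joining them would meet the degree $c_2$ jumping curve in at least $2(c_2-1)>c_2$ points, absurd for $c_2\ge 4$ (the case where this joining line is itself a component of the jumping curve must be excluded separately). Hence the super jumping line is unique, the residual jumping degree is $\le 1$, and every other line is general or, only when $c_1=0$, of type $(1,-1)$.

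For (ii), let $H$ be semi--stable, so a nonzero $\sigma\in H^0(F)$ has $0$--dimensional zero--scheme $Z$ of length $c_2$ (no divisorial zeros, since $h^0(F(-1))=0$). If $L\subset H$ is a super jumping line, then $\sigma|_L$ lands in the positive summand of $F_L$ and vanishes exactly on $Z\cap L$, so $\mathrm{length}(Z\cap L)\ge c_2-1$: at most one point of $Z$ lies off $L$. Two distinct super jumping lines $L,L'$ would each carry all but one point of the length $c_2$ scheme $Z$, forcing $Z$ to concentrate at the single point $L\cap L'$ to length $\ge c_2-2\ge 2$ while lying on two distinct lines; this is incompatible with $Z$ being a local complete intersection, so a semi--stable plane carries at most one super jumping line.

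For (iii), a special plane $H$ carries a sub--line--bundle $\oc_H(m)\hookrightarrow E_H$ with $m\ge 2$, equivalently a nonzero class $\xi\in H^1(E(-3))$ killed by the defining form $h_H$, via $0\to E(-3)\xrightarrow{h_H}E(-2)\to E_H(-2)\to 0$ together with $H^0(E(-2))=0$ (stability of $E$). If $H_1\ne H_2$ were both special, I would restrict the two maximal destabilizing sub--line--bundles to the line $R=H_1\cap H_2$: each yields $\oc_R(m_i)\hookrightarrow E_R$ with $m_i\ge 2$, so $E_R\cong\oc_R(p)\oplus\oc_R(c_1-p)$ with $p\ge 2$, and the plan is to glue the two destabilizing subsheaves across $R$ into a rank one reflexive subsheaf of $E$ on $\Pt$ of positive slope, contradicting the stability of $E$. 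This last step---turning two plane--wise destabilizing sections into a single global one, in the spirit of the ``reduction step'' foreshadowed in the introduction---is the main obstacle: it requires controlling the $0$--dimensional zero--schemes of the two sections and checking that the glued subsheaf really has the claimed slope. The remaining delicate points are, in (i), ensuring that the residual point for $c_1=0$ does not spoil the super jumping conclusion, and justifying the multiplicity bound for the jumping curve used in the uniqueness argument.
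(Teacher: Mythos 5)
Two steps of your proposal are genuinely incomplete. First, the uniqueness part of (i) rests on the claim that ``the curve of jumping lines of a stable rank two bundle on $\Ptw$ has degree $c_2$, and a jumping line of order $r$ is a point of multiplicity $\ge r$ on it.'' That is a statement about $c_1=0$ only: for $c_1=-1$ the locus of (first--kind) jumping lines of a stable bundle on $\Ptw$ is not a curve of degree $c_2$ --- generically it is a finite scheme of length $\binom{c_2}{2}$, and the curve one attaches to odd $c_1$ is the curve of jumping lines of the second kind, of degree $2(c_2-1)$ --- so your Bezout count simply does not apply to half the cases of the lemma. Even for $c_1=0$ you leave open the case where the line of $\check{\mathbb{P}}^2$ joining the two points is a component of the jumping curve, and the multiplicity bound itself is left unjustified. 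Second, part (iii) is not proved: you yourself flag the gluing of the two plane--wise destabilizing sub--line--bundles into a global rank one subsheaf of $E$ as ``the main obstacle,'' and indeed agreement of splitting data along $R=H_1\cap H_2$ gives no mechanism for such a gluing; as written this is a plan, not an argument.

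Both gaps are avoidable with tools you already set up. For (i), having shown that all of $Z$ except at most one point lies on $L_H$, restrict $0\to\oc_H\to E_H(1)\to\ic_{Z,H}(c_1+2)\to 0$ to an arbitrary line $L\ne L_H$: since $L\cap L_H$ is a reduced point, $s:=\mathrm{length}(Z\cap L)\le 2$ (and $\le 1$ if $c_1=-1$), the resulting extension $0\to\oc_L(s-1)\to E_L\to\oc_L(c_1-s+1)\to 0$ splits except possibly when $c_1=s=0$, and reading off the splitting type $(s-1,c_1-s+1)$ gives simultaneously the uniqueness of the s.j.l.\ and the exact list of other splitting types, uniformly in $c_1$. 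For (iii) the intended route is different and elementary: cut the two alleged special planes $H_1,H_2$ by a general \emph{stable} plane $H$; the lines $H\cap H_i$ are general in $H_i$, hence of type $(m_i,-m_i+c_1)$ with $m_i>1$, and by (i) a stable plane cannot contain two lines of first splitting degree $>1$. Two smaller slips: in your existence argument the restricted section vanishes in length $\ge c_2$ (not $\ge c_2-1$), which is exactly what is needed to force the larger summand of $E_{L}$ to have degree $\ge c_2-1$; and in (ii) no local complete intersection hypothesis is needed --- the contradiction is just that the scheme--theoretic intersection of two distinct lines has length one, so $\mathrm{length}(Z\cap L)+\mathrm{length}(Z\cap L')\le c_2+1$, forcing $c_2\le 3$. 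Part (ii) is otherwise correct and coincides with the paper's argument.
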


\begin{proof} (i) If $H$ is a stable plane every section of $E_H(1)$ vanishes in codimesion two:
$$0 \to \oc _H \to E_H(1) \to \ic _{Z,H}(2+c_1) \to 0\,\,\,(*)$$
We have $h^0(\ic _{Z,H}(2+c_1)) \geq 2+c_1$. If $c_1=-1$, $Z$ has degree $c_2$ and is contained in a line $L_H$. If $c_1=0$, we have $h^0(\ic _{Z,H}(2)) \geq 2$. Since $\deg (Z)=c_2+1 > 4$, the conics have a fixed line, $L_H$, and there is left a pencil of lines to contain the residual scheme of $Z$ with respect to $L_H$. It follows that the residual scheme is one point and that $length\,(Z\cap L_H)=c_2$. So in both cases there is a line, $L_H$, containing a subscheme of $Z$ of length $c_2$. Restricting $(*)$ to $L_H$ we get $E_{L_H} \to \oc _{L_H}(1+c_1-c_2)$. It follows that the splitting type of $E_{L_H}$ is $(c_2-1,c_1-c_2+1)$, hence $L_H$ is a s.j.l. If $L\neq L_H$ is another line in $H$, let $s$ be the length of $L\cap Z$. Restricting $(*)$ to $L$ we get:
$$0 \to \oc _L(s-1) \to E_L \to \oc _L(c_1-s+1) \to 0$$
This sequence splits except maybe if $c_1=s=0$ (in this case the splitting type is $(0,0)$ or $(1,-1)$). If $L$ is a s.j.l. then $s \geq c_2$, hence $L = L_H$. This shows that a stable plane contains a unique s.j.l. Since $s \leq 1$ (resp. $s \leq 2$) if $c_1=-1$ (resp. $c_1=0$), a line different from $L_H$ is general or has splitting type $(1,-1)$.\\
(ii) If $H$ is semi-stable then we have:
$$0 \to \oc _H \to E_H \to \ic _{T,H}(c_1) \to 0\,\,\,(**)$$
Here $\deg (T)=c_2$. If $L$ is a line in $H$ let $s$ denote the length of $L\cap T$. From $(**)$ we get: $0 \to \oc _L(s) \to E_L \to \oc _L(c_1-s)\to 0$. This sequence splits, so the splitting type of $E_L$ is $(s, -s+c_1)$. If $L$ is a s.j.l. then $s \geq c_2-1$ and $L$ contains a subscheme of length at least $\deg (T)-1$ of $T$. Since $c_2 \geq 4$, such a s.j.l. is uniquely defined. This shows that an unstable plane contains at most one s.j.l.\\
(iii) We may assume $h^0(E_H(-m-1))=0$. We have:
$$0 \to \oc _H \to E_H(-m) \to \ic _{X,H}(c_1-2m) \to 0\,\,\,(***)$$
If $L$ is a general line of $H$ ($L \cap X=\emptyset$) then $E_L$ has splitting type $(m, -m+c_1)$, with $m > 1$. 

Let's show that such a special plane, if it exists, is unique. Assume $H_1, H_2$ are two special planes. Let $H$ be a general stable plane. If $L_i=H\cap H_i$, then $L_1, L_2$ are two lines of $H$ with splitting type $(k_i, -k_i+c_1)$, $k_i > 1$. By (i) this is impossible. 
\end{proof}

We are ready for the proof of the theorem.

\begin{proof}[Proof of Theorem \ref{T-thm}:]\quad \\
Let $U \subset \PtD$ be the dense open subset of stable planes. We have a map $\fG :U \to G(1,3)$ defined by $\fG (H)=L_H$ where $L_H$ is the unique s.j.l. contained in $H$. So $\fG$ gives a rational map $\fG : \PtD ---> G(1,3)$. We claim that $\fG$ doesn't extend as a morphism to $\PtD$. Indeed in the contrary case we would have a section of the incidence variety $I =\{(H,L)\mid L \subset H\} \to \PtD$. Since $I \simeq Proj(\Omega _{\PtD}(1))$ (indeed the fibre at $H$ of $\Omega _{\PtD}(1)$ is the hyperplane corresponding to $H$), such a section corresponds to an injective morphism of vector bundles $\oc _{\PtD} \hookrightarrow T_{\PtD}(k)$, for some $k$. But there is no twist of $T_{\PtD}$ with a non-vanishing section. This can be seen by looking at $c_3(T_{\PtD}(k))$ or with the folllowing argument: the quotient would be a rank two vector bundle with $H^1_* =0$, hence, by Horrocks' theorem, a direct sum of line bundles which is absurd.

If $H$ is a singular point of the ''true'' rational map $\fG$, then, by Zariski's Main Theorem, $H$ contains infinitely many s.j.l. This implies that $H$ is the unique special plane (and that $\fG$ has a single singular point). We claim that every s.j.l. is contained in $H$. Indeed let $R$ be a s.j.l. not contained in $H$. Let $z = R\cap H$. There exists a s.j.l. $L \subset H$ through $z$. The plane $\lag R, L\rag$ contains two s.j.l. hence it is special: contradiction.

Since there are $\infty ^2$ s.j.l. we conclude that the general splitting type on the special plane $H$ is $(c_2-1, -c_2+c_1+1)$. So $m = c_2-1$ i.e. $h^0(E_H(-c_2+1))\neq 0$ (and this is the least twist having a section). Now we perform a reduction step (see \cite{SRS} Prop. 9.1).

If $c_1=0$ we get:
$$0 \to E' \to E \to \ic _{W,H}(-c_2+1) \to 0$$
where $E'$ is a rank two reflexive sheaf with Chern classes $c'_1=-1, c'_2=1, c'_3=c_2^2-c_2+1$. Since $E$ is stable, $E'$ too is stable. By \cite{SRS} Theorem 8.2 we get a contradiction.

If $c_1 =-1$, since $E_H^* = E_H(1)$ we get:
$$0 \to E'(-1) \to E \to \ic _{R,H}(-c_2) \to 0$$
where the Chern classes of $E'$ are: $c'_1=0, c'_2=0$, $c'_3=c_2^2$. Since $E$ is stable $E'$ is semi-stable. By \cite{SRS} Theorem 8.2 we get, again, a contradiction.
\end{proof}

\begin{remark}
The argument to show that $\fG$ doesn't extend to a morphism is taken from \cite{Franco}. Another way to prove this is to consider the surfaces $S_L$: if $L$ is a general line every plane through $L$ is (semi-)stable, the general one being stable. So almost every plane of the pencil contains a unique s.j.l. taking the closure yields a ruled surface $S_L$. Then one shows that $S_L \neq S_D$ if $L,D$ are general and then concludes by looking at $S_L \cap S_D$ (see \cite{Ellia-vanish}).
\end{remark}

\begin{remark}
\label{R-sharp}
The assumption $c_2 \geq 4$ cannot be weakened. If $c_1=-1$ every stable rank vector bundle, $E$, with $c_1=-1, c_2=2$ is such that $h^0(E_H(1))=2$ for a general plane $H$ (see \cite{Ha-Sols}).
If $E(1)$ is associated to four skew lines, then $h^0(E_H(1))=3$ for $H$ general and $c_i(E)=(0,3)$.\\
On the other hand a special t'Hooft bundle, ($E(1)$ associated to $c_2+1$ disjoint lines on a quadric) is stable with $c_1(E)=0$ and, if $c_2\geq 4$, satisfies $h^0(E_H(1))=2$ for $H$ general.\\
By the way, Theorem \ref{T-thm} gives back $h^0(E(1))\leq 2$ for an instanton, a result first proved by Boehmer and Trautmann (see \cite{N-Trautmann} and references therein).\\ 
Finally let $E(1)$ be associated to the disjoint union of $c_2/2$ double lines of arithmetic genus -2. Then $E$ is stable with $c_1=-1$ and, if $c_2>2$, $h^0(E_H(1))=1$ for $H$ general.
\end{remark}

Concerning properly semi-stable bundles ($c_1(E)=0, h^0(E)\neq 0, h^0(E(-1))=0$) we have:

\begin{proposition}
\label{P-sstable}
Let $E$ be a properly semi-stable rank two vector bundle on $\Pt$. Assume $E$ indecomposable. If $H$ is a general plane then $h^0(E_H(1))=3$.
\end{proposition}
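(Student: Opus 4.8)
The plan is to reduce the statement to a non-collinearity property of the general plane section of the zero locus of the unique section of $E$. Since $E$ is semi-stable with $h^0(E)\neq 0$, any nonzero section vanishes in codimension two (a section vanishing on a divisor of degree $d\geq 1$ would give $\oc(d)\hookrightarrow E$, hence $h^0(E(-1))\neq 0$), so it vanishes on a curve $Y$ of degree $c_2$ and fits in
\[
0\to \oc \to E \to \ic_Y \to 0 .
\]
I would then show $h^0(E)=1$. If $s_1,s_2$ were independent sections, then $s_1\wedge s_2\in H^0(\det E)=H^0(\oc)$; if this is nonzero the map $\oc^2\to E$ is everywhere nondegenerate and $E\cong \oc^2$, contradicting indecomposability, while if $s_1\wedge s_2=0$ the two sections span a rank one subsheaf whose saturation is a line bundle $\oc(a)\subset E$ with $h^0(\oc(a))\geq 2$, so $a\geq 1$ and $h^0(E(-1))\neq 0$, contradicting semi-stability. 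Thus $Y$ is unique, and $Y\neq\emptyset$ (otherwise the sequence splits and $E\cong\oc^2$).

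Next I would restrict to a general plane $H$. Transversality gives $0\to \oc_H \to E_H \to \ic_{Z,H}\to 0$ with $Z=Y\cap H$ of length $c_2$. Twisting by $1$ and using $h^1(\oc_H(1))=0$ yields $h^0(E_H(1))=3+h^0(\ic_{Z,H}(1))$. Hence the Proposition is \emph{equivalent} to $h^0(\ic_{Z,H}(1))=0$, i.e. to the assertion that the $c_2$ points $Z$ do not lie on a line of $H$.

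To control $Z$ I would first establish that $Y$ is nondegenerate. Since $E$ is locally free, Serre's construction forces $\omega_Y\cong \oc_Y(c_1-4)=\oc_Y(-4)$. If $Y$ were contained in a plane $P$ it would be a Cohen--Macaulay curve of pure dimension one in the smooth surface $P$, hence a Cartier divisor of degree $c_2$, and adjunction in $P$ would give $\omega_Y\cong\oc_Y(c_2-3)$; comparing with $\oc_Y(-4)$ gives $c_2=-1$, absurd. Therefore $h^0(\ic_Y(1))=0$. Feeding this into the sequence $0\to \ic_Y\xrightarrow{\,h\,}\ic_Y(1)\to \ic_{Z,H}(1)\to 0$ (and using $h^0(\ic_Y)=0$) identifies
\[
h^0(\ic_{Z,H}(1))=\dim\ker\bigl(H^1(\ic_Y)\xrightarrow{\,h\,}H^1(\ic_Y(1))\bigr),
\]
and since $H^1(\ic_Y(k))=H^1(E(k))$ for $k\geq 0$, the whole Proposition comes down to the injectivity, for a general linear form $h$, of the multiplication $H^1(E)\xrightarrow{\,h\,}H^1(E(1))$.

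The main obstacle is exactly this last injectivity, equivalently the non-collinearity of the general plane section of $Y$. The difficulty is that $Y$ cannot be treated by the classical General Position Theorem: from $\omega_Y=\oc_Y(-4)$ one gets $p_a(Y)=1-2c_2<0$, so $Y$ is necessarily reducible or non-reduced. I expect to rule out collinearity by a pencil argument — if $Z\subset \ell_H$ then $Y\cap\ell_H=Z$ and a general plane through $\ell_H$ meets $Y$ only in $Z$, which confines each component of $Y$ to a plane containing $\ell_H$ — and then to use indecomposability of $E$ to exclude the residual degenerate configurations (unions of lines admitting a common transversal family, multiplicity structures supported on a single line, and the borderline value $c_2=2$), for which such a collinear section could otherwise occur. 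In module-theoretic terms this amounts to showing that the finite length Rao module $H^1_*(E)$ has no socle element in degree $0$, and it is here that the indecomposability hypothesis must be used decisively.
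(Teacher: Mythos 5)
Your reductions are correct and in fact retrace the paper's own proof: the Serre sequence $0\to\oc\to E\to\ic_Y\to 0$ with $\omega_Y\cong\oc_Y(-4)$, the identity $h^0(E_H(1))=3+h^0(\ic_{Y\cap H,H}(1))$ for a general plane $H$, and the observation that $Y$ cannot be a plane curve (a plane curve of degree $d$ has $\omega_Y\cong\oc_Y(d-3)$). But your argument stops exactly where the real content begins. The remaining assertion --- if $h^0(\ic_{Y\cap H,H}(1))\neq 0$ for general $H$, then $Y$ lies in a plane --- is a genuine lifting theorem of Laudal--Strano type; it is precisely what the paper imports from \cite{Strano} and \cite{Ellia-lacunes}, it is the only nontrivial input, and it is where characteristic zero enters (cf.\ Remark \ref{R-sstable}). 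Your substitute is announced (``I expect to rule out collinearity by a pencil argument\dots'') but never carried out, and as sketched it does not get off the ground: the line $\ell_H$ varies with $H$, so controlling the planes through one fixed $\ell_{H_0}$ does not ``confine each component of $Y$'' to anything; and, as you yourself observe, $p_a(Y)=1-2c_2<0$ forces $Y$ to be reducible or non-reduced, which is exactly the situation in which naive general-position arguments fail and the cited theorem's proof has to work hard.

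A second, concrete problem is your claim that indecomposability of $E$ will dispose of the residual cases, in particular $c_2=2$. It will not. The double line $Y$ of arithmetic genus $-3$ supported on $L=\{x=y=0\}$, with $\ic_Y=(x^2,xy,y^2,gx-fy)$ for $f,g$ cubic forms on $L$ without common zeros, is a local complete intersection with $\det(\ic_Y/\ic_Y^2)\cong\oc_Y$, hence $\omega_Y\cong\oc_Y(-4)$; Serre's construction then yields a locally free, indecomposable (a split bundle with $c_1=0$ has $c_2\le 0$), properly semi-stable $E$ with $c_2=2$. Its general plane section has length $2$, hence is automatically collinear, giving $h^0(E_H(1))=4$. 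So no appeal to indecomposability can exclude this configuration: the degree-two case genuinely escapes the $\sigma=1$ lifting theorem (which needs $\deg Y\ge 3$) and must either be ruled out by an added hypothesis $c_2\ge 3$ or listed as an exception. In short, the heart of the proof (the lifting step) is missing from your proposal, and the strategy you announce for supplying it is not viable as described.
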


\begin{proof} We have $0 \to \oc \to E \to \ic _C\to 0$, where $C$ is a curve ($E$ doesn't split) with $\oG _C(4)\simeq \oc _C$. Twisting and restricting to a general plane: $0 \to \oc _H(1) \to E_H(1) \to \ic _{C\cap H,H}(1)\to 0$. If $h^0(\ic _{C\cap H,H}(1))\neq 0$ it follows from a theorem of Strano (\cite{Strano}, \cite{Ellia-lacunes}) that $C$ is a plane curve, but this is impossible ($\oG _C(4) \not \simeq \oc _C$ for a plane curve).
\end{proof}

\begin{remark}
\label{R-sstable}
To apply Strano's theorem we need $ch(k)=0$ (see \cite{Ha-Fe}). The previous argument gives a quick proof of Theorem \ref{T-thm} in case $c_1=-1, h^0(E(1))\neq 0$. In fact this remark has been the starting point of this note.
\end{remark}  

\begin{remark}
\label{R-reflexive}
Let $C$ be a plane curve of degree $d$. A non-zero section of $\oG _C(3)\simeq \oc _C(d)$ yields: $0 \to \oc \to \fc (1) \to \ic _C(1) \to 0$, where $\fc$ is a stable rank two reflexive sheaf with Chern classes $(-1, d, d^2)$. If $H$ is a general plane, $h^0(\fc _H(1))= 2$ if $d > 1$ (resp. $3$ if $d=1$). Similarly, considering the disjoint union of a plane curve and of a line, we get stable reflexive sheaf with $c_1(\fc )=0$ and $h^0(\fc _H(1))=3$. So Theorem \ref{T-thm} doesn't hold for stable reflexive sheaves. The interested reader can try to classify the exceptions.
\end{remark}



\end{document}